\let\ifGm@compatii\relax\makeatother
\begin{document}

\theoremstyle{plain}
\newtheorem{Thm}{Theorem}[section]
\newtheorem{TitleThm}[Thm]{}
\newtheorem{Corollary}[Thm]{Corollary}
\newtheorem{Proposition}[Thm]{Proposition}
\newtheorem{Lemma}[Thm]{Lemma}
\newtheorem{Conjecture}[Thm]{Conjecture}
\theoremstyle{definition}
\newtheorem{Definition}[Thm]{Definition}
\theoremstyle{definition}
\newtheorem{Example}[Thm]{Example}
\newtheorem{TitleExample}[Thm]{}
\newtheorem{Remark}[Thm]{Remark}
\newtheorem{SimpRemark}{Remark}
\renewcommand{\theSimpRemark}{}

\numberwithin{equation}{section}

\newcommand{\C}{{\mathbb C}}
\newcommand{\Q}{{\mathbb Q}}
\newcommand{\R}{{\mathbb R}}
\newcommand{\Z}{{\mathbb Z}}
\newcommand{\mbS}{{\mathbb S}}
\newcommand{\mbU}{{\mathbb U}}
\newcommand{\mbO}{{\mathbb O}}
\newcommand{\mbG}{{\mathbb G}}
\newcommand{\mbH}{{\mathbb H}}

\newcommand{\flushpar}{\par \noindent}

\newcommand{\proj}{{\rm proj}}
\newcommand{\coker}{{\rm coker}\,}
\newcommand{\Sol}{{\rm Sol}}
\newcommand{\supp}{{\rm supp}\,}
\newcommand{\codim}{{\operatorname{codim}}}
\newcommand{\sing}{{\operatorname{sing}}}
\newcommand{\Tor}{{\operatorname{Tor}}}
\newcommand{\Hom}{{\operatorname{Hom}}}
\newcommand{\wt}{{\operatorname{wt}}}
\newcommand{\graph}{{\operatorname{graph}}}
\newcommand{\rk}{{\operatorname{rk}}}
\newcommand{\dlog}{{\operatorname{Derlog}}}
\newcommand{\Olog}[2]{\Omega^{#1}(\text{log}#2)}
\newcommand{\produnion}{\cup \negmedspace \negmedspace 
\negmedspace\negmedspace {\scriptstyle \times}}
\newcommand{\pd}[2]{\dfrac{\partial#1}{\partial#2}}

\def \ba {\mathbf {a}}
\def \bb {\mathbf {b}}
\def \bc {\mathbf {c}}
\def \bd {\mathbf {d}}
\def \bone {\boldsymbol {1}}
\def \bg {\mathbf {g}}
\def \bG {\mathbf {G}}
\def \bh {\mathbf {h}}
\def \bi {\mathbf {i}}
\def \bj {\mathbf {j}}
\def \bk {\mathbf {k}}
\def \bK {\mathbf {K}}
\def \bm {\mathbf {m}}
\def \bn {\mathbf {n}}
\def \bt {\mathbf {t}}
\def \bu {\mathbf {u}}
\def \bv {\mathbf {v}}
\def \by {\mathbf {y}}
\def \bV {\mathbf {V}}
\def \bx {\mathbf {x}}
\def \bw {\mathbf {w}}
\def \b1 {\mathbf {1}}
\def \bz {\mathbf {z}}
\def \bga {\boldsymbol \alpha}
\def \bgb {\boldsymbol \beta}
\def \bgg {\boldsymbol \gamma}
\def \bgs {\boldsymbol \sigma}
\def \bgth {\boldsymbol \theta}

\def \itc {\text{\it c}}
\def \ite {\text{\it e}}
\def \ith {\text{\it h}}
\def \iti {\text{\it i}}
\def \itj {\text{\it j}}
\def \itm {\text{\it m}}
\def \itM {\text{\it M}} 
\def \itn {\text{\it n}}
\def \ithn {\text{\it hn}}
\def \itt {\text{\it t}}

\def \cA {\mathcal{A}}
\def \cB {\mathcal{B}}
\def \cC {\mathcal{C}}
\def \cD {\mathcal{D}}
\def \cE {\mathcal{E}}
\def \cF {\mathcal{F}}
\def \cG {\mathcal{G}}
\def \cH {\mathcal{H}}
\def \mcI {\mathcal{I}}
\def \cK {\mathcal{K}}
\def \cL {\mathcal{L}}
\def \cM {\mathcal{M}}
\def \cN {\mathcal{N}}
\def \cO {\mathcal{O}}
\def \cP {\mathcal{P}}
\def \cQ {\mathcal{Q}}
\def \cR {\mathcal{R}}
\def \cS {\mathcal{S}}
\def \cT {\mathcal{T}}
\def \cU {\mathcal{U}}
\def \cV {\mathcal{V}}
\def \cW {\mathcal{W}}
\def \cX {\mathcal{X}}
\def \cY {\mathcal{Y}}
\def \cZ {\mathcal{Z}}

\def \ga {\alpha}
\def \gb {\beta}
\def \gg {\gamma}
\def \gd {\delta}
\def \ge {\epsilon}
\def \gevar {\varepsilon}
\def \gk {\kappa}
\def \gl {\lambda}
\def \gs {\sigma}
\def \gt {\tau}
\def \gw {\omega}
\def \gz {\zeta}
\def \gG {\Gamma}
\def \gD {\Delta}
\def \gL {\Lambda}
\def \gS {\Sigma}
\def \gW {\Omega}

\def \dim {{\rm dim}\,}
\def \mod {{\rm mod}\;}
\def \rank {{\rm rank}\,}

\newcommand{\ds}{\displaystyle}
\newcommand{\vf}{\vspace{\fill}}
\newcommand{\vect}[1]{{\bf{#1}}}
\def\R{\mathbb R}
\def\C{\mathbb C}
\def\CP{\mathbb{C}P}
\def\RP{\mathbb{R}P}
\def\N{\mathbb N}

\def\Sym{\mathrm{Sym}}
\def\Sk{\mathrm{Sk}}
\def\GL{\mathrm{GL}}
\def\Diff{\mathrm{Diff}}
\def\id{\mathrm{id}}
\def\Pf{\mathrm{Pf}}
\def\sll{\mathfrak{sl}}
\def\g{\mathfrak{g}}
\def\h{\mathfrak{h}}
\def\k{\mathfrak{k}}
\def\t{\mathfrak{t}}
\def\OcN{\mathscr{O}_{\C^N}}
\def\Ocn{\mathscr{O}_{\C^n}}
\def\Ocm{\mathscr{O}_{\C^m}}
\def\Ocnz{\mathscr{O}_{\C^n,0}}
\def\Derlog{\mathrm{Derlog}\,}
\def\expdeg{\mathrm{exp\,deg}\,}

\title[Rigidity Properties of the Blum Medial Axis]
{Rigidity Properties of the Blum Medial Axis}
\author[James Damon]{James Damon}

\address{Department of Mathematics, University of North Carolina, Chapel 
Hill, NC 27599-3250, USA
}

\keywords{Blum Medial Axis, skeletal structures, radial vectors and lines, branching submanifolds, boundary properties, diffeomorphisms, triple cross ratio, rigidity conditions, infinitesimal medial conditions, radial shape operator, radial distortion operator}

\subjclass{Primary: 11S90, 32S25, 55R80
Secondary:  57T15, 14M12, 20G05}

\begin{abstract}
We consider the Blum medial axis of a region in $\R^n$ with piecewise smooth boundary and examine its \lq\lq rigidity properties\rq\rq, by which we mean properties preserved under diffeomorphisms of the regions preserving the medial axis.  There are several possible versions of rigidity depending on what features of the Blum medial axis we wish to retain.  We use a form of the cross ratio from projective geometry to show that in the case of four smooth sheets of the medial axis meeting along a branching submanifold, the cross ratio defines a function on the branching sheet which must be preserved under any diffeomorphism of the medial axis with another.  Second, we show in the generic case, along a $Y$-branching submanifold that there are three cross ratios involving the three limiting tangent planes of the three smooth sheets and each of the hyperplanes defined by one of the radial lines and the tangent space to the 
$Y$-branching submanifold at the point, which again must be preserved.  Moreover, the triple of cross ratios then locally uniquely determines the angles between the smooth sheets.  Third, we observe that for a diffeomorphism of the region preserving the Blum medial axis and the infinitesimal directions of the radial lines, the second derivative of the diffeomorphism at points of the medial axis must satisfy a condition relating the radial shape operators and hence the differential geometry of the boundaries at corresponding boundary points.  
\end{abstract} 
\maketitle
\vspace{2ex}
\centerline{\bf Preliminary Version}
\vspace{1ex}
\par
\section*{Introduction}  
\label{S:sec0}
We consider the Blum medial axes of regions $\gW_i \subset \R^n$ with piecewise smooth boundaries $\cB_i$ and examine their rigidity properties (which will be preserved under diffeomorphisms of the regions preserving the medial axes).  The Blum medial axis was introduced in \cite{BN} and its properties for generic regions with smooth boundaries were obtained in \cite{Yo} and \cite{M}, also see \cite{GK} (and more generally for regions with piecewise smooth boundaries in \cite[Part 1]{DG} and 
\cite{DG2}).  Their extensive uses for imaging questions are covered in the book 
\cite{PS}.  \par 
For distinct diffeomorphic regions with homeomorphic medial axes, a basic question is whether there are diffeomorphisms between the regions which preserve various properties of the medial axis structures.  This raises the prospect that there are rigidity properties that must be preserved under a diffeomorphism.  There are several possible versions of rigidity depending on which features of the Blum medial axis we wish to retain.   We list several of these in \S \ref{S:sec1} and ask when there are diffeomorphisms preserving the geometric properties. \par  
For example, the $2D$ regions in Figure \ref{fig.1} appear to be smoothly similar enough that there is a smooth diffeomorphism between them which preserves the medial axes of the regions.  Unfortunately, this is not the case in general due to the presence of a cross ratio of the tangent lines to the four branches at the central point. In this note we examine how rigid mathematical invariants based on the cross ratios associated to various geometric features serve as obstructions to obtaining diffeomorphisms with the additional properties.  In \S \ref{S:sec1} we list several increasingly restrictive conditions on the medial structure which a diffeomorphism might be asked to preserve.  This raises the question in what ways methods such as those developed by Yushkevich et. al. in \cite{Y}, which do obtain diffeomorphisms of regions satisfying 1) in the list, must only provide an approximation to satisfying 2), 3), or 4) in the list?     
\begin{figure}[ht]
\centerline{\includegraphics[width=12cm]{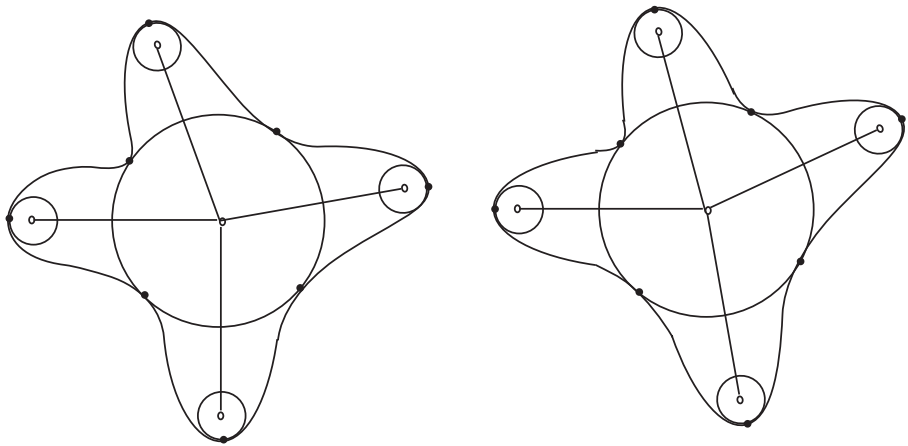}}
\caption{\label{fig.1}  Regions $\gW_i$ with Blum medial axes $M_i$ which are homeomorphic as pairs$(\gW_i, M_i)$.  In addition, the regions $\gW_i$ are diffeomorphic by a diffeomorphism of the boundaries which sends the boundary points corresponding to the four $A_3$ points (where the maxima of curvature of the boundary curves occur)  and the four $A_1^4$ points on each boundary curve (corresponding to the central points) to the others.  A basic question is whether there is a diffeomorphism satisfying 1) and 2) in the list in \S \ref{S:sec1} but which also maps $M_1$ to $M_2$?}  
\end{figure}  
\par
We consider arbitrary dimensions $n \geq 2$ and use a form of the cross ratio from projective geometry applied to hypersurfaces to show that in the case of four smooth sheets of the medial axis meeting along a branching submanifold, the cross ratio defines a function on the branching submanifold which must be preserved under any diffeomorphism of the medial axis with another.  Second, in the generic case, along a $Y$-branching submanifold there are three cross ratios involving the three limiting tangent spaces of the three smooth sheets together with a hyperplane spanned by one of the radial lines together with the tangent space to the $Y$-branching submanifold at the point.  If the diffeomorphism infinitesimally preserves the radial lines, then we show these three cross ratios must again be preserved.  Moreover, we show that the ordered triple of cross ratios uniquely locally determine the ordered triple of angles between the branching smooth sheets.  \par 
Third, we observe as a result of \cite[\S 3 and \S 5]{D1} that for a diffeomorphism of the region preserving the Blum medial axis and the infinitesimal directions of the radial lines, the second derivative of the diffeomorphism at points of the medial axis must satisfy an algebraic condition relating the radial shape operators, and hence the differential geometry of the boundaries at corresponding boundary points.
\par
As a consequence of these results, if we wish to preserve medial structures under diffeomorphisms of regions, then in general the structures must be allowed to belong to the more general class of \lq\lq skeletal structures\rq\rq (see \cite{D} or \cite{D2}).  Even for these, it will further follow that it may be only possible to have stratawise (for the skeletal sets) diffeomorphisms of the regions.  \par
 Although we develop the results for general $\R^n$, we specifically indicate the form they take for imaging questions for the special cases of regions in $\R^2$ and $\R^3$.
\section{Types of Rigidity Questions for the Blum medial Axis}  
\label{S:sec1}
\par
We consider the Blum medial axis of regions $\gW_i \subset \R^n$ with piecewise smooth boundaries $\cB_i$ and examine their rigidity properties.  There are several possible versions of rigidity depending on what features of the Blum medial axis we wish to retain.  We let $\gW_i$ have Blum medial axis $M_i$ with associated 
multi-valued radial vector field $U_i$ on $M_i$ from points $x \in  M_i$ to the corresponding points $ \psi(x) = x + U_i(x)$ on the boundary $\cB_i$.  Suppose 
that the pairs $(\gW_i, M_i)$, $i = 1, 2$ are at least homeomorphic, and that there 
is a diffeomorphism $\varphi : \gW_1 \simeq \gW_2$.  We may ask several questions 
about whether we can modify $\varphi$ to preserve features of the $M_i$.  \par
\vspace{1ex} 
These might include: \flushpar
{\it Properties Involving Types of Rigidity :} \par
\begin{itemize}
\item[1)] in addition, $\varphi$ maps the points on the boundary $y_j \in \cB_1$ corresponding to the point $x_j \in M_1$ to the points $\varphi(y_j) \in \cB_2$ corresponding to the point $\varphi(x_j) \in M_2$; or 
\item[2)] in addition to 1) that $\varphi$ restricts to a diffeomorphism $\varphi : M_1 \simeq M_2$; or
\item[3)] in addition to 1) and 2), that $d\varphi(x_j)(U_1(x_j)) =  U_2(\varphi(x_j))$ for $x_j \in M_1$ and all values of $U_1(x_j)$; or 
\item[$3^{\prime})$] in addition to 1) and 2), that $d\varphi(x_j)$ at least preserves radial lines, i.e. $d\varphi(x_j)(\langle U_1(x_j)\rangle) =  \langle U_2(\varphi(x_j))\rangle$ for $x_j \in M_1$ and all values of $U_1(x_j)$; or
\item[4)] if $\varphi$ satisfies both 1) and 2), how closely can $\varphi$ satisfy 3) or at least $3^{\prime})$ as well? 
\end{itemize}
While condition 3) would be desirable for a diffeomorphism preserving the full Blum medial structure, we shall see that already satisfying $3^{\prime})$ places significant restrictions on diffeomorphisms.   \par
We consider these properties in both the generic and non-generic cases (where in the later we assume the Blum medial axis still satisfies the conditions for being a skeletal structure as in \cite{D}).  These will also apply to regions with piecewise smooth boundaries as e.g. in \cite[Part I]{DG} or \cite{DG2}.  We will be principally concerned with how the medial structure at branching points restricts the existence of diffeomorphisms preserving the medial structure given by the above conditions.  We do not attempt at this time to determine further specialized conditions for edge points, or in $\R^3$ for fin points, or $6$-junction points.  \par
	For example, in Fig. \ref{fig.1}  are simple regions with nongeneric medial axis structures which apparently should have diffeomorphic deformations between them.  However, in fact, the two competing conditions of preserving the medial axis and being a diffeomorphism on the entire interior region are completely incompatible.  We will see that no such diffeomorphism is possible.  In Fig. \ref{fig2.1} we illustrate a generic medial axis structure at a branch point in $\R^2$.  The inclusion of the radial vectors at the branch point provides sufficient additional data so that again we identify obstructions to the existence of diffeomorphisms satisfying condition $3^{\prime})$.  We also explain how the results extend to higher dimensions.  
\par
	For diffeomorphisms of regions preserving the medial axes structures as in 3) we also explain a further obstruction involving a second order condition on the diffeomorphism at points of the medial axis which relates a specific second derivative to the Òradial shape operatorsÓ for each of the medial axes.  It gives a specific algebraic relation involving the radial shape operators and a Òradial distortion operatorÓ defined from the second derivative of the diffeomorphism.  This was derived in \cite[Thm 5.4]{D1}, and it is also briefly discussed in the author\lq s chapter \cite[\S 3.3.3]{PS}).  \par

\section{Infinitesimal Properties of the Blum Medial Axis at Branch Points}  
\label{S:sec2}
\par
Before considering differentiable invariants of the Blum medial axis at branch points, we first obtain several simple relations between the angles of the tangent spaces and the radial vectors.  
\par 
We consider a point $x$ on an $A_1^3$ stratum of a generic Blum medial axis.  This is a branch point in the 2D case and a point on a $Y$-branch curve for the 3D case.  We first consider the case of a region $ \gW \subset \R^2$. The corresponding results for general $\R^n$ follows by using angles between the limiting hyperplanes tangent to the three smooth sheets at the branch point.  For the three branches there are unique limiting tangent lines $L_i$, $ i = 1, \dots, 3$, forming successive angles $\theta_i$ between the successive lines, given on the counterclockwise direction, as illustrated in Fig. \ref{fig2.1}.  We also consider the radial vectors $U_i$ from $x$ to points on the boundary and in the region corresponding to the angle $\theta_i$.  Third, we let $\ga_j$ denote the angle from the line $L_i$ to the next radial vector $U_i$ going counterclockwise toward the line $L_j$.  Then, there is the following relation between the angles.
\begin{Lemma}
\label{Lem2.1}
At a generic $Y$ branching point, the angles as above satisfy the relations 
$\ga_i = \pi - \theta_i$, $i = 1, \dots , 3$.
\end{Lemma}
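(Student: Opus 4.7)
The plan is to reduce the identity $\ga_i = \pi - \theta_i$ to a single geometric fact about each sheet at the branch point and then finish by elementary accounting of the angles around $x$. The first step is to show that each limiting tangent line $L_k$ bisects the angle between the two radial vectors whose sectors it separates. Along the interior of a smooth medial sheet the Blum axis is locally the perpendicular bisector of its two tangent boundary points, so its tangent direction is perpendicular to the chord between them; since the Blum condition forces these two radial vectors to have equal length, that chord is itself perpendicular to their angle bisector, so the tangent coincides with the bisector. Passing to the limit along the sheet up to $x$ preserves this identification for the limiting tangent line $L_k$ and its two flanking radial vectors.

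The second step is to exploit this bisector property. For the sector bounded by $L_i$ and $L_j$ (CCW) containing the radial vector $U_i$, let $A_i$ denote the CCW angle from $L_i$ to $U_i$ and $B_i$ the remaining CCW angle from $U_i$ to $L_j$, so that $\theta_i = A_i + B_i$. The bisector symmetry at $L_j$ forces $B_i$ to equal the $A$-angle of the adjacent sector, i.e.\ $B_i = A_{i+1}$ (with indices mod $3$). Hence each sector angle decomposes as $\theta_i = A_i + A_{i+1}$, and summing over the three sectors yields $2\pi = \sum_i \theta_i = 2(A_1 + A_2 + A_3)$, so $A_1 + A_2 + A_3 = \pi$.

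Subtracting from this total the two $A$-angles whose sum is $\theta_i$ leaves a single $A$-angle equal to $\pi - \theta_i$, and the lemma's subscript convention (in which $\ga_j$ denotes the CCW angle from $L_i$ to $U_i$ with $L_j$ the sheet ending the sector) identifies that remaining $A$-angle with $\ga_i$ under the cyclic relabeling of indices. Therefore $\ga_i = \pi - \theta_i$ for $i = 1, 2, 3$, as asserted. The main obstacle will be tracking the cyclic subscripts so that the bisector-induced index shift lands precisely on $\ga_i$ rather than on one of its cyclic neighbors; once the bisector property of step one is in hand, the remainder of the argument is elementary angle arithmetic around the branch point.
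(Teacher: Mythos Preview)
Your proposal is correct and follows essentially the same approach as the paper: both use the Blum equal-angle (bisector) property at each tangent line $L_k$ to produce three linear relations among the $\theta_i$ and the $\alpha$-type angles, then combine these with $\theta_1+\theta_2+\theta_3=2\pi$ to solve. Your additional justification of the bisector property and your auxiliary variables $A_i,B_i$ are cosmetic elaborations; once $B_i=A_{i+1}$ is established, your system $\theta_i=A_i+A_{i+1}$ is a relabeling of the paper's system $\theta_1=\alpha_2+\alpha_3$, etc., and the endgame (sum to get $\sum A_i=\pi$, then subtract) is identical.
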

\begin{proof}
We use the property of the Blum medial axis that the angles from the tangent line 
$L_i$ to the radial vectors on each side of $L_i$ are equal, to obtain the equations
\begin{align}
\label{Eqn2.1}
\theta_1 \,\, &= \,\, \ga_2 + \ga_3  \notag  \\
\theta_2 \,\, &= \,\, \ga_3 + \ga_1  \notag  \\
\theta_3 \,\, &= \,\, \ga_1 + \ga_2 
\end{align}
Using $\theta_1 + \theta_2 + \theta_3 = 2\pi$, we easily verify that these have unique solutions $\ga_i = \pi - \theta_i$, $i = 1, \dots , 3$.
\end{proof}
\par  
\begin{figure}[ht]
\centerline{\includegraphics[width=6cm]{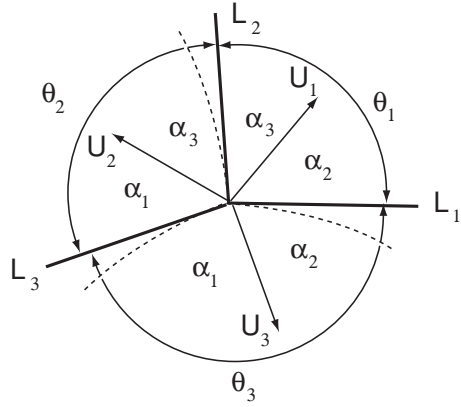}}
\caption{\label{fig2.1}  The configuration of tangent lines, radial vectors and angles at a generic branch point of the Blum medial axis of a region in $\R^2$.}  
\end{figure}  
\par
Second, suppose that there are four branch curves with tangent lines $L_i$, $i = 1, \dots , 4$ in counterclockwise order as in Fig. \ref{fig2.2}, with the radial vectors $U_i$ ordered as above, and the angles $\gb_i$ are from $L_i$ to the next radial vector $U_i$ in the counterclockwise direction.  
\begin{Lemma}
\label{Lem2.2}
At a generic $Y$ branching point with four branch curves, the angles as above satisfy the following relations:  
\begin{equation}
\label{Eqn2.2}
  \theta_1 + \theta_3\,\, = \,\,  \theta_2 + \theta_4 \, ; 
\end{equation}
and the values for $\gb_i$ are given in parametrized form by 
\begin{equation}
\label{Eqn2.3}
 (\gb_1, \gb_2, \gb_3, \gb_4) \,\, = \,\, (\theta_4, \theta_1 - \theta_4, \theta_3, 0) + t(-1, 1, -1, 1) 
\end{equation}
\end{Lemma}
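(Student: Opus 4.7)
The plan is to imitate the proof of Lemma \ref{Lem2.1}: exploit the defining property of the Blum medial axis that each tangent line $L_i$ bisects the angle between the two radial vectors on either side of it, and then solve the resulting linear system. With four branch curves this gives one equation at each $L_i$, for a total of four equations in the four unknowns $\gb_i$.

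First I would set up the system. Fixing the convention that $U_i$ lies in the angular sector between $L_i$ and $L_{i+1}$ (indices mod $4$) and that $\gb_i$ is the CCW angle from $L_i$ to $U_i$, the bisector property at $L_i$ identifies the angle $\gb_i$ on the CCW side of $L_i$ with the angle from $L_i$ going CW to $U_{i-1}$, which equals $\theta_{i-1}-\gb_{i-1}$. This yields the four relations
\begin{align}
\label{Eqn2.4}
\gb_1 + \gb_4 \,\, &= \,\, \theta_4  \notag \\
\gb_2 + \gb_1 \,\, &= \,\, \theta_1  \notag \\
\gb_3 + \gb_2 \,\, &= \,\, \theta_2  \notag \\
\gb_4 + \gb_3 \,\, &= \,\, \theta_3
\end{align}
together with the global constraint $\theta_1 + \theta_2 + \theta_3 + \theta_4 = 2\pi$.

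Next I would extract the compatibility condition \eqref{Eqn2.2}. Taking the alternating sum of the left sides of \eqref{Eqn2.4} gives $0$, so the alternating sum of the right sides must vanish, i.e.\ $\theta_4 - \theta_1 + \theta_2 - \theta_3 = 0$, which is exactly $\theta_1 + \theta_3 = \theta_2 + \theta_4$. Conversely, under this condition the $4\times 4$ coefficient matrix of \eqref{Eqn2.4} has rank $3$, so the solution set is a $1$-parameter affine family.

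Finally I would exhibit the parametrization \eqref{Eqn2.3}. The homogeneous system has kernel spanned by $(-1,1,-1,1)$ (visible by direct substitution, since each equation pairs consecutive $\gb$'s). For a particular solution I would set $\gb_4 = 0$ and solve \eqref{Eqn2.4} sequentially to obtain $\gb_3 = \theta_3$, $\gb_2 = \theta_1 - \theta_4$ (using the constraint to rewrite $\theta_2 - \theta_3$), and $\gb_1 = \theta_4$, matching the vector $(\theta_4, \theta_1 - \theta_4, \theta_3, 0)$ in the lemma. The only real obstacle here is keeping the cyclic indexing and CCW/CW orientations consistent; once that bookkeeping is set, the result is a short linear algebra verification.
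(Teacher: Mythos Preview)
Your proposal is correct and follows essentially the same approach as the paper: both derive the identical linear system \eqref{Eqn2.4} from the equal-angle property of the Blum medial axis at each $L_i$, then extract the compatibility condition \eqref{Eqn2.2} and the one-parameter family \eqref{Eqn2.3} by elementary linear algebra. Your version is slightly more explicit (the alternating sum for \eqref{Eqn2.2}, and the particular solution plus kernel for \eqref{Eqn2.3}) where the paper simply invokes Gaussian elimination, but the argument is the same.
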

\begin{proof}
Then, we use the property of the Blum medial axis that the angles from the tangent line 
$L_i$ to the radial vectors on each side of $L_i$ are equal, to obtain the equations
\begin{align}
\label{Eqn2.4}
\theta_1 \,\, &= \,\, \gb_1 + \gb_2  \notag  \\
\theta_2 \,\, &= \,\, \gb_2 + \gb_3  \notag  \\
\theta_3 \,\, &= \,\, \gb_3 + \gb_4  \notag  \\
\theta_4 \,\, &= \,\, \gb_4 + \gb_1 
\end{align}
Then, using a standard method such as Gaussian elimination, we see that 
\eqref{Eqn2.2} is a necessary condition for a solution and then we may solve these for the $\gb_i$ to obtain the above solutions given by \eqref{Eqn2.3}.
\end{proof}
\begin{Remark}
\label{Rem2.2a}
We observe that a consequence of Lemma \ref{Lem2.2}, given fixed angles $\theta_i$, $i = 1, \dots , 4$, there is a family of angles for the radial vectors consistent with the Blum condition.  As a consequence, even if the diffeomorphism preserves the Blum medial axis, there is a continuous family of consistent angles for radial vectors, so it would not in general preserve the directions of the radial vectors.    
\end{Remark}

\begin{figure}[ht]
\centerline{\includegraphics[width=6cm]{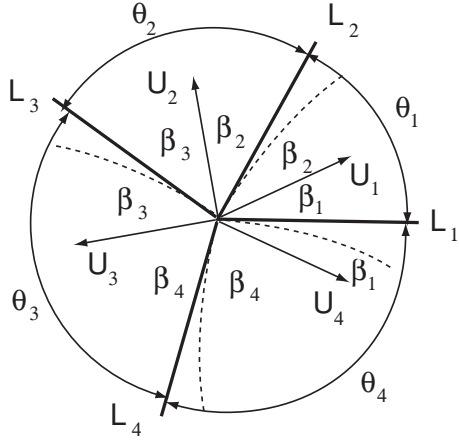}}
\caption{\label{fig2.2}  The configuration of tangent lines, radial vectors and angles at a branch point of the Blum medial axis where four curves meet for a region in $\R^2$.}  
\end{figure} 
\par
These arguments extend to higher dimensions by the properties of the Blum medial axis in the generic case or in the nongeneric case for a skeletal structure provided at any smooth point $x \in M$ the pair the radial vectors $U_1(x)$ and $U_2(x)$ at $x$ make equal angles with $T_x M$ and satisfy $U_1(x) - U_2(x)$ is normal to $T_x M$.  This continues to hold in the limit for a singular point $x$ where we approach $x$ along a smooth sheet of $M$.  Let $\gG$ denote a codimension $2$ branching stratum with $x \in \gG$, and for a smooth stratum $M_i$ with $\gG$ in its closure, we let $T_xM_i$ denote the limiting tangent plane to $M_i$ at $x$.  By the properties of Blum medial axes in the generic case or skeletal structures, $T_x \gG \subset T_xM_i$.  \par 
Then, let $P$ denote the orthogonal plane to $T_x \gG$ at $x$.  For $P$ we choose an orthonormal basis $\{ e_1, e_2\}$ with $e_1 \in T_x M_i$ and so $e_2 \perp T_x M_i$.  
Since $U_1(x) - U_2(x)$ is normal to $T_x M_i$ we may write them $U_1 = v + a e_1 + b e_2$ and $U_1 = v + a e_1 - b e_2$ with $v \in T_x \gG$.  The hyperplanes $\Pi _i$ spanned by $T_x \gG$ with each $U_i$ therefore also contain, respectively, $a e_1 \pm b e_2$.  As $P$ is also orthogonal (and hence transverse) to $T_x M_i$, $M_i \cap P$ is a curve $\gg$ with limiting point $x$ and limiting tangent line $T_x \gg = P \cap T_x M_i$ which is spanned by $e_1$.  It follows that the vectors $a e_1 + b e_2$ and $a e_1 - b e_2$ in $P$ make equal angles with $e_1$ and hence the tangent line 
$T_ \gg$.  However, the angles between the hyperplanes $\Pi_i$ and $T_x M_i$ are given by these angles in the orthogonal plane $P$. \par  
Since we may repeat this argument for each smooth sheet whose closure contains the stratum $\gG$, we arrive in the generic case with the configuration of line and curves in the orthogonal plane $P$ as in Fig. \ref{fig2.1}, and in the nongeneric case with four smooth sheets meeting along a branching stratum $\gG$ the configuration in $P$ as in Fig. \ref{fig2.2}. 
\par
\section{Cross Ratio}  
\label{S:sec3}
Next, we recall the properties of the cross ratio, which is an invariant of four ordered points in a projective line, and indicate how it applies to four hypersurfaces $H_i$ in 
$\R^n$ which contain a common codimension $2$ subspace $L$. \par
\subsection*{Cross Ratio for Points in a Projective Line } \hfill  
\par
First the cross ratio is generally defined for four distinct points $\{z_1, \dots , z_4\}$ in a complex projective line $\CP^1$. The cross ratio is defined by 
\begin{equation}
\label{Eq2.1} 
R(z_1, \dots , z_4) \,\,  = \,\,  \frac{(z_1 - z_4)(z_3 - z_2)}{(z_1 - z_2)(z_3 - z_4)}  \, .
\end{equation}
As $\CP^1$ can be viewed as the complex plane with point added at $\infty$, this value is defined if no $z_i = \infty$, and there is an assignment in the case one is $\infty$ by taking a limit as the $z_i \to \infty$.  \par
We note that this depends on the order of the $z_i$.  If the order is changed and we let $\gl$ denote the cross ratio in \eqref{Eq2.1}, then after permuting the order of the points, we obtain five additional values obtained from $\gl$, under the operations: 
\begin{equation}
\label{Eq2.1a} 
\qquad \frac{1}{\gl}, \qquad 1 - \gl, \qquad \frac{1}{1 - \gl}, \qquad \frac{\gl - 1}{\gl}, \qquad \frac{\gl} {\gl - 1}  
\end{equation}  
These are the set of values obtained under the action of the finite group generated by the two transformations $\gl \mapsto \frac{1}{\gl}$ and $\gl \mapsto 1 - \gl$.  This group is isomorphic to the permutation group on three letters $\cS_3$.
Furthermore, it is a basic fact from projective geometry that for any collection of four ordered distinct points, the cross ratio is invariant under a projective transformations of $\CP^1$.  Now points in $\CP^1$ can be identified with lines in $\C^2$ through the origin.  Then the corresponding basic fact from projective geometry states that for 
any collection of four ordered distinct lines in $\C^2$ through the origin, the cross ratio 
is invariant under any invertible linear transformation of $\C^2$.  \par
In the case that the points are real, then the cross ratios are real and there is a corresponding statement for lines in $\R^2$.  These and other properties may be found, for example, in \cite[\S 3.3]{Az}.  \par
To compute the cross ratio of four lines in $\R^2$, suppose they are rotated so none lies along the $y$-axis.  Then they all have the form $y = a_i x$, $ i = 1, \dots , 4$.  If we let the $y$-axis be the line at infinity, and the line $x = 1$ corresponds to the complementary affine line, then the line $y = a_i x$ corresponds to the point $y = a_i$, and the cross ratio is given by $\frac{(a_1 - a_4)(a_3 - a_2)}{(a_1 - a_2)(a_3 - a_4)}$.
\par
\subsection*{Generalized Cross Ratio for Hyperplanes in $\C^n$} \hfill  \par
The notion of cross ratio extends to hyperplanes in $\C^n$.  Let $H_i \subset \C^n$, 
$i = 1, \cdots , 4$, denote hyperplanes containing the codimension $2$ subspace $L$.  If $L^{\perp}$ denotes the orthogonal complement to $L$, then each 
$H_i \cap L^{\perp} = L_i \subset L^{\perp} \simeq \C^2$.  Thus, the four ordered lines have a cross ratio $R(L_1, \dots , L_4)$; and a permutation of the hyperplanes gives a set of six values as above.  Moreover, if instead of $L^{\perp}$, we chose a plane $\Pi$ through the origin and transverse to $L$, then we obtain a second set of ordered lines $L_i^{\prime} = H_i \cap \Pi = L_i \subset \Pi \simeq \C^2$.  If we consider the restriction to $\Pi$ of the orthogonal projection of $\C^n$ to $L^{\perp}$ along $L$, then it gives an isomorphism $\Pi \simeq L$ which sends $L_i^{\prime} \mapsto L_i$.  Hence by the invariance of the cross ratio under invertible linear transformations, we obtain the same value for the cross ratio using either $L^{\perp}$ or $\Pi$.  Hence, the cross ratio is an intrinsic invariant of the four ordered hyperplanes and is invariant under invertible linear transformations of $\C^n$.  Again, there is a corresponding result for hyperplanes in 
$\R^n$.  In fact, what we really are saying is that the set of hyperplanes containing $L$ forms a projective line in the dual projective space $\CP^{n-1\, *}$ and the cross ratio is the invariant for that projective line.\par
	In the next sections we see the consequences for rigidity properties of the cross ration and its generalization for hyperplanes. 

\section{Rigidity Properties for Four Smooth Strata Meeting Along a Branching Submanifold}  
\label{S:sec4}
\par
\par
We begin by considering regions in $\R^n$.  Suppose that the region $\gW \subset 
\R^n$ has a nongeneric Blum medial axis $M$ which together with its multivalued radial vector field still satisfies the condition for being a skeletal structure.  In particular, we suppose there is a codimension $2$ branching stratum $\gG \subset M$ along which four smooth (codimension one) medial sheets $S_j$, $j = 1, \dots, 4$ meet, and moreover for any $x \in \gG$ there are unique limiting tangent planes $T_x S_j$ with $T_x \gG \subset T_x S_j$ for each $j$.  We then define a cross ratio invariant for this situation.  For the point $x \in \gG$ we have four hyperplanes $T_i = T_xS_i$, and they each contain the codimension two subspace $T_x \gG$.  Hence, by the arguments in \S \ref{S:sec3}, the four distinct hyperplanes containing the common subspace $T_x \gG$ have a real-valued cross ratio.  Allowing different ordering again gives the six possible real values as in \eqref{Eq2.1a}.  We can thus give a 
well-define {\em cross ratio map}  $\chi : \gG \to \RP^1/\cS_3 \simeq S^1$, where the target space consists of the sets of corresponding cross ratio values. 
\par  
Suppose that the two regions $\gW_i \subset \R^n$ both have medial axes which each contain a submanifold $\gG_i$ as above, along which four smooth medial sheets $S^{(i)}_j$, $j = 1, \dots, 4$ meet.  Then there is the following strong rigidity condition on a diffeomorphism.  
\begin{Thm}[Strong Generalized Rigidity]
\label{Thm4.2}
Suppose there is a diffeomorphism $\varphi$ defined in a neighborhood of $\gG_1$ which sends $\gG_1$ to $\gG_2$ and the medial sheets $S^{(1)}_j$ of $\gW_1$ to those $S^{(2)}_j$ of $\gW_2$.  Then, for $\chi_i$, $i = 1, 2$ denoting the cross ratio maps for $\gG_i$, we must have $\chi_2\circ \varphi = \chi_1$.  
\end{Thm}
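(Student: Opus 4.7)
The plan is to reduce the theorem directly to the invariance of the generalized cross ratio under invertible linear transformations, which was established in \S \ref{S:sec3}. The diffeomorphism $\varphi$ has, at each point $x \in \gG_1$, a differential $d\varphi(x) : T_x \R^n \to T_{\varphi(x)} \R^n$ that is an invertible linear map, and the whole argument consists of observing that this differential carries the four relevant hyperplanes at $x$ bijectively onto the four relevant hyperplanes at $\varphi(x)$ while preserving the common codimension-$2$ subspace.

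More precisely, fix $x \in \gG_1$ and set $y = \varphi(x) \in \gG_2$. Since $\varphi$ restricts to a diffeomorphism $\gG_1 \to \gG_2$, we have $d\varphi(x)(T_x\gG_1) = T_y\gG_2$. Because $\varphi$ also restricts to a diffeomorphism $S^{(1)}_j \to S^{(2)}_j$ on each smooth sheet in a neighborhood of the branching stratum, and because each $S^{(i)}_j$ admits a unique limiting tangent hyperplane at points of $\gG_i$ (a hypothesis built into the skeletal structure assumption), continuity of $d\varphi$ gives $d\varphi(x)(T_x S^{(1)}_j) = T_y S^{(2)}_j$ for $j = 1,\dots,4$. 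Thus the four ordered hyperplanes $\{T_x S^{(1)}_j\}$ in $T_x \R^n$, all containing the codimension-$2$ subspace $T_x\gG_1$, are taken by the invertible linear map $d\varphi(x)$ to the four ordered hyperplanes $\{T_y S^{(2)}_j\}$, all containing $T_y\gG_2$. Applying the invariance of the generalized cross ratio under invertible linear transformations of $\R^n$ from \S \ref{S:sec3}, the cross ratios of the two ordered quadruples coincide, so $\chi_2(\varphi(x)) = \chi_1(x)$. Since $\chi_i$ takes values in $\RP^1/\cS_3$, the equality is independent of any reordering that $\varphi$ might a priori introduce among the sheets.

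The argument is essentially a bookkeeping of tangential data combined with the linear-algebraic fact already proved, so there is no genuine obstacle; the only point that deserves careful justification is the commutation of $d\varphi$ with the limiting-tangent-hyperplane operation along $\gG_i$. This in turn is guaranteed by the fact that each sheet $S^{(i)}_j$ is a smooth manifold (with boundary $\gG_i$) admitting a $C^1$ extension of its tangent hyperplane field up to $\gG_i$, and $\varphi$ is itself $C^1$ (indeed a diffeomorphism) in a neighborhood of $\gG_1$; so the limit $\lim_{p \to x,\, p \in S^{(1)}_j} d\varphi(p)(T_p S^{(1)}_j)$ agrees with $d\varphi(x)(T_x S^{(1)}_j)$, which equals $T_{y} S^{(2)}_j$ by the analogous limiting argument on the target side. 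With this verification in hand, the theorem follows without further computation.
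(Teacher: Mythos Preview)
Your proof is correct and follows essentially the same approach as the paper: fix $x\in\gG_1$, observe that $d\varphi(x)$ is an invertible linear map carrying the four limiting tangent hyperplanes $T_xS^{(1)}_j$ (all containing $T_x\gG_1$) to the four limiting tangent hyperplanes $T_{\varphi(x)}S^{(2)}_j$ (all containing $T_{\varphi(x)}\gG_2$), and then invoke the invariance of the generalized cross ratio from \S\ref{S:sec3}. If anything, your version is slightly more explicit than the paper's in justifying why $d\varphi(x)$ commutes with the limiting-tangent-hyperplane operation along the branching stratum, and in noting that the $\cS_3$-quotient in the target of $\chi_i$ absorbs any reordering of the sheets.
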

\par
\begin{proof}
For any point $x \in \gG_1$, the derivative $d_x\varphi$ will send the limiting tangent planes $T^{(1)}_j = T_x S^{(1)}_j$ to $T^{(2)}_j = T_{\varphi(x)} S^{(2)}_j$.  Also, the four tangent planes $T^{(1)}_j$ contain the common codimension $2$ tangent space $T_x \gG_1$, and $T^{(2)}_j$, the common codimension $2$ tangent space 
$T_ {\varphi(x)} \gG_2$.  \par
Now by the above arguments, through a point $x$, resp. $\varphi(x)$, there is again a set of cross ratios for each ordering.  Thus, the sets of four limiting tangent planes gives rise to a set of six values as in \eqref{Eq2.1a}.  Since $d\varphi(x) (T^{(1)}_j) = T^{(2)}_j$ for $j = 1, \cdots, 4$, it follows by the invariance of the cross ratios that the two sets of cross ratios must agree.  Thus, the induced maps $\chi_2\circ \varphi$ and 
$\chi_1$ must have the same values for each point $x \in \gG_1$.  
\end{proof}
\par
The cross ratio is a \lq\lq rigid\rq\rq\, invariant for four lines in 
$\R^2$ meeting at a point. As the four tangent hyperplanes vary continuously, the cross ratio maps $\chi_i$ on the branching stratum are varying and so the rigidity has a very strong form that they must be matched exactly by the diffeomorphism for each pair of points.  \par
The simplest form of this is for regions in $\R^2$. 
\vspace{1ex}
\begin{Example}
\label{Ex4.1}
We consider two regions $\gW_i \subset \R^2$, $i = 1, 2$, as in Fig. \ref{fig.1} with smooth boundaries $\cB_i$, and medial axes $M_i$.  There is a diffeomorphism between the boundaries which preserves the corresponding medial data on the boundary.  By this medial data we mean the four points of types $A_3$ and four points of types $A_1^4$ on the boundary corresponding to the points on the medial axis with their corresponding medial type.  This diffeomorphism satisfies properties 1) and 2).  We consider the tangent lines to the four curve branches meeting at the center ($A_1^4$) point.  Each set of these four lines have, up to a choice of ordering, a {\em set of cross ratios} $\{\gl_j^{(i)}$\}, $i = 1, 2$.  
\par  
Suppose we have two medial axes each consisting of four branch curves $\{\gg^{(i)}_j : j = 1, \dots , 4\}$ for $i = 1, 2$.  Let $\ell^{(i)}_j$ denote the tangent line to $\gg^{(i)}_j$ at the corresponding center point.   There is the following special case of Theorem \ref{Thm4.2}.
\end{Example}
\begin{Corollary}
\label{Cor4.1}
If the sets of tangent lines $\ell^{(i)}_j$, $j = 1, \dots , 4$, give distinct sets of six values for each medial axis, then there does not exist a diffeomorphism 
$\varphi : U_1 \to U_2$ defined between the neighborhoods $U_i$ of the center points which maps one set of the four branch curves $\gg^{(1)}_j$ to the other set of the four branch curves $\gg^{(2)}_j$ (after renumbering).
\end{Corollary}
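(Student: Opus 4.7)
The plan is to prove the corollary by contradiction as a direct specialization of Theorem \ref{Thm4.2} to the planar case, where the \lq\lq branching stratum\rq\rq\ $\gG$ collapses to a single point (the center point).

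First, I would assume such a diffeomorphism $\varphi: U_1 \to U_2$ exists, mapping, after a renumbering of indices, each branch curve $\gg^{(1)}_j$ to $\gg^{(2)}_j$ and sending the center point of $M_1$ to that of $M_2$. Since each $\gg^{(i)}_j$ is smooth at the center with tangent line $\ell^{(i)}_j$, the derivative $d\varphi$ at the center point is a well-defined invertible linear map $\R^2 \to \R^2$ that sends each $\ell^{(1)}_j$ to $\ell^{(2)}_j$ (with the same renumbering).

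Next, I would invoke the invariance of the cross ratio under invertible linear transformations of $\R^2$, as recalled in \S\ref{S:sec3}: four ordered lines through the origin in $\R^2$ have a cross ratio that is preserved by any element of $\GL(\R^2)$. Applying this to $d\varphi$ at the center, the cross ratio of the ordered tangent lines $(\ell^{(1)}_1,\ldots,\ell^{(1)}_4)$ must equal that of $(\ell^{(2)}_1,\ldots,\ell^{(2)}_4)$ under the renumbering induced by $\varphi$. Allowing all orderings of the four branches produces the full $\cS_3$-orbit of six values, so the unordered set of six cross ratios associated to the tangent lines at the center point of $M_1$ must coincide with that at the center point of $M_2$.

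This contradicts the hypothesis that the two sets of six values are distinct, completing the proof. There is essentially no hard step here beyond verifying that the differentiability of $\varphi$ at the center point, together with the smoothness of each branch curve, legitimately identifies $d\varphi$ as a projective transformation on the pencil of lines through the origin; the substantive content has already been established in \S\ref{S:sec3} and Theorem \ref{Thm4.2}. The only minor subtlety worth noting explicitly is that the renumbering of indices permitted in the statement is harmless precisely because the invariant being tested is the full unordered six-element $\cS_3$-orbit, which is unchanged by relabeling the four lines.
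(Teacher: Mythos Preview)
Your proposal is correct and follows essentially the same approach as the paper: both treat the corollary as the planar specialization of Theorem~\ref{Thm4.2}, where the branching stratum degenerates to a single point so that only one set of cross ratios arises, and then invoke the invariance of the cross ratio under the derivative $d\varphi$ at the center to derive a contradiction. Your write-up is in fact more explicit than the paper's two-line proof, spelling out why $d\varphi$ carries each tangent line $\ell^{(1)}_j$ to $\ell^{(2)}_j$ and why passing to the unordered six-element orbit handles the renumbering.
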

\begin{proof}
 In this case there is only a single set of cross ratios.  If they disagree at the branch points then there cannot be a diffeomorphism preserving the medial axes.  
\end{proof}
\par
In the preceding situation, suppose we wish to define a diffeomorphism preserving the medial axis.  Suppose the diffeomorphism is constructed to send three of the four curves to three of the curves for the second configuration, but the cross ratios are significantly different.  Then the image of the fourth curve will differ significantly from the fourth curve.
\begin{Remark}
\label{Rem4.2}
Since the cross ratio value may be any real number it follows that that given any two random choices of sets of four distinct lines, with probability $1$, the sets of cross ratios will be distinct.  Thus, for arbitrary random choices of regions as in Figure \ref{fig.1}. there will be no diffeomorphism between the regions preserving the medial axes. 
\end{Remark}
\par 
\begin{figure}[ht]
\centerline{\includegraphics[width=8cm]{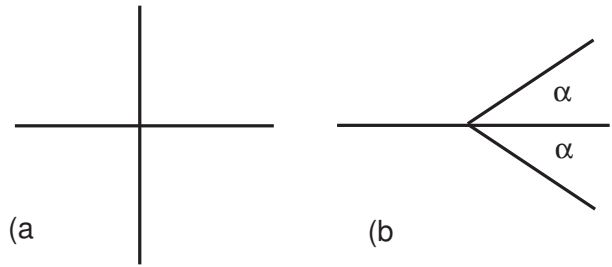}}
\caption{\label{fig4.3}  Illustrating for two Blum medial axes for regions in $\R^2$ where four curves meet at a branch point, the nonexistence of a local diffeomorphism mapping one Blum medial axis to the other.}  
\end{figure} 
\begin{Example}
\label{Ex4.3}
	We consider the configuration of half lines in Fig. \ref{fig4.3} which represent medial Blum medial axes of regions $\gW_1$ and $\gW_2$ in a neighborhood of a branch point.  Although the configurations are degenerate so the cross ratio does not apply.  However, we illustrate how far a local diffeomorphism must distort the radial structure.  Consider the local diffeomorphism  $\varphi : \gW_1 \simeq \gW_2$ which maps a neighborhood of the branch point (denoted $0$) of one to that of the other.  	Suppose $\varphi$ maps the $x$-axis to the $x$-axis and the positive $y$-axis to the line in the first quadrant making the angle $\ga$ with the positive $x$-axis.  Then, the derivative $d\varphi(0)$ is a linear transformation sending the $x$-axis to the $x$-axis, so $d\varphi(0)$ sends $e_1 = (1, 0) \mapsto (s, 0)$ for some $s > 0$. Also, as 
$\varphi$ sends the positive $y$-axis to the angled line in the first quadrant, it must send $e_2 = (0, 1) \mapsto (r \cos(\ga) , r \sin(\ga))$ for some $r > 0$.  This completely determines the derivative $d\varphi(0)$.  \par
	Then, the negative $y$-axis is being sent to a curve, which by the linearity of 
$d\varphi(0)$,  has tangent vector at the origin given by $d\varphi(0)((0, -1)) =  (-r \cos(\ga) , -r \sin(\ga))$.  Thus, the curve initially heads into the third quadrant making an angle $\ga$ with the negative $x$-axis.  For example, if $\ga = \pi/3$, then the curve will initially make an angle of $2\pi/3$ with the second angled line in the fourth quadrant that we would like it to map to.  Hence, it must make a large turn to even go in the roughly correct direction.  
\end{Example}
\par  
This example was investigated by Yushkevich who using the algorithm in \cite{Y} to obtain a video which shows that the $y$-axis of the source actually maps to a parabolic curve tangent to the $y$-axis.  Thus, initially two of the medial curves move in a significantly different direction from those of the target medial curves. The preceding results show that any attempt to construct a local diffeomorphism must encounter a similar phenomenon when there are different cross ratios.  We show in the next section that if we include the radial vectors in the Blum structure, that a similar phenomenon occurs in the generic case. 
\par
\begin{Remark}
\label{Rem4.3}
Because there are many different ways in which such a configuration may occur within a nongeneric medial axis, it follows that there are many circumstances where there is no diffeomorphism preserving the medial axis.  In particular, if there are more than four smooth sheets meeting along a branching submanifold, then successively choosing four such sheets gives a set of cross ratio invariants.  While such a condition is not generic, we next consider further invariants which incorporate the radial vectors at the branch points.
\end{Remark} 
\par 

\section{Rigidity of Infinitesimal Properties for Smooth Strata Meeting Generically Along a Branching Submanifold}  
\label{S:sec5}
\par
We next consider the effect of diffeomorphisms in the generic case.  Generically for regions in $\R^2$, $3$ medial curves meet at a branch point; or for regions in $\R^3$, three medial surfaces meeting along a $Y$-branch curve, and quite generally for a generic region in $\R^n$ there can be three smooth sheets of the medial axis meeting along a branching codimension $2$ subspace $\gG$.  For a branch point $x \in \gG$, there are three radial vectors from $x$ to the boundary.  Each of the radial vectors from $x$ determine a radial line in the complementary region corresponding to the point on the boundary.  This line along with tangent space $T_x \gG$ determines a hypersurface
 in $\R^n$. This hyperplane together with the other three limiting tangent hyperplanes of the smooth sheets again give four hyperplanes containing the codimension $2$ subspace $T_x \gG$.  These have a cross ratio.  We can compute it by intersecting the hyperplanes with a plane $P$ through $x$ and transverse to $T_x \gG$.  We use the notation from \S \ref{S:sec2}, we compare the cross ratios formed from the three lines $L_i$, $i = 1, \dots ,3$ obtained by intersecting with $P$ the tangent hyperplanes and the line $\widetilde{L}$, determined by intersecting with $P$ the hyperplane containing $T_x \gG$ and one of the radial vectors ($U_j$ will be understood).  Thus, there are three cases. \par
Referring to Fig. \ref{fig2.1}, we first determine the four values $a_i$ for the ordered lines $L_1, L_2, L_3, \widetilde{L}$ given in the form $y = a_i x$ where we rotate so that $L_1$ is the $x$-axis, we obtain \eqref{Eqn5.1}.
\begin{equation}
\label{Eqn5.1}
(a_1, a_2, a_3, a_4) \,\, = \,\, (0, -\tan(\theta_1), \tan(\theta_3), \tan(\theta_2))  
\end{equation}
Likewise, for the other two cases we rotate so that $L_2$, resp. $L_3$ is the $x$-axis, for the corresponding lines $\widetilde{L}$ for $U_2$, resp. $U_3$.  Then, we obtain the values for the $a_i$ given by \eqref{Eqn5.2}. 
\begin{equation}
\label{Eqn5.2}
 (0, -\tan(\theta_2), \tan(\theta_1), \tan(\theta_3)) \quad \text{resp.} \quad (0, -\tan(\theta_3), \tan(\theta_2), \tan(\theta_1)) \, .
\end{equation}
We observe the the three sets of values are successively transformed by the transformation 
$$ (0, -c_1, c_2, c_3) \,\, \mapsto \,\, (0, -c_3, c_1, c_2) \, .$$
\par  
However, the set of six cross ratios for a set of values $(0, -c_1, c_2, c_3)$ is not invariant under this transformation.  Thus, in general the sets of cross ratios will be distinct for the three radial vectors.  We illustrate this with an example.
\begin{Example}
\label{Ex5.3}
We consider the case of the angles $(\theta_1, \theta_2, \theta_3) = (\frac{2}{3}\pi, \frac{5}{9}\pi, \frac{7}{9}\pi)$.  We obtain the set of values in \eqref{Eqn5.1} and \eqref{Eqn5.2} with corresponding cross ratios $\gl_i$ to be
\begin{align}
\label{Eqn5.4}
[0, 5.671281833, -1.732050808, -0.8390996312] \quad \gl_1 \,\, &= -1.226681596  \notag  \\
[0, 0.8390996312, -5.671281833, -1.732050808] \quad \gl_2 \,\, &= -3.411474126  \notag  \\
[0, 1.732050808, -0.8390996312, -5.671281833] \quad \gl_3 \,\, &= 1.742227197
\end{align}
\par
Then, the corresponding sets of six cross ratio values given by the cross ratio and the other five values obtained by permuting the order of the values as in \eqref{Eq2.1a} are given by
\begin{align}
\label{Eqn5.5}
&[-1.226681596, -0.8152074697, 2.226681596, 0.4490987853, 1.815207470 0.5509012147]  \notag  \\
& [-3.411474126, -0.2931284140, 4.411474126, 0.2266815970, 1.293128414, 0.7733184030] \notag  \\
&[1.742227197, 0.5739779529, -0.742227197, -1.347296359, 0.4260220471, 2.347296359] \notag  \\
&\, 
\end{align}
We see these are distinct sets of cross ratio values.  
\end{Example}
\par
Consequently, we have the following rigidity theorem in the generic case for the Blum structure at a branch point.  We consider regions $\gW_i$, $i = 1, 2$, with generic 
Blum medial axes.  We suppose that there is a local diffeomorphism $\varphi$ from a neighborhood of a branch point $x \in \gG_1$ for $\gG_1$ the $A_1^3$ stratum for 
$\gW_1$, to a neighborhood $\varphi(x) \in \gG_2$ for $\gG_2$ the $A_1^3$ stratum 
for $\gW_2$.  We now let $\chi^{(1)}_j : \gG_1  \to S^1$ denote the corresponding cross ratio for the three limiting tangent spaces at $x$ to the smooth sheets and the hyperplane defined by the radial vector $U_j$, with $\chi^{(2)}_j : \gG_2  \to S^1$.  
the corresponding cross ratios for $\gG_2$.
\begin{Thm}[Rigidity for Generalized Blum Structures]
\label{Thm5.6}
Suppose the diffeomorphism $\varphi$ defined in a neighborhood of the branch point $x \in \gG_1$ which sends $\gG_1$ to $\gG_2$, sending the medial sheets $S^{(1)}_j$ of $\gW_1$ to those $S^{(2)}_j$ of $\gW_2$, and also $d\varphi(x)$ preserves radial lines, i.e.  $d\varphi(x)(\langle U_i(x)\rangle) = \langle U_i^{\prime}(\varphi(x))\rangle$ for the radial vectors $U_i$ on $\gG_1$ and $U_i^{\prime}$ on $\gG_2$.  Then, for $\chi^{(i)}_j$, $j = 1, 2, 3$ denoting the cross ratio maps for $\gG_i$, we must have $\chi^{(2)}_j\circ \varphi = \chi^{(1)}_j$ for $j = 1, 2, 3$. 
\end{Thm}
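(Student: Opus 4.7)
The plan is to adapt the proof of Theorem \ref{Thm4.2} to the generic $A_1^3$ setting, with the fourth hyperplane at a branch point now supplied not by a fourth smooth sheet but by the auxiliary hyperplane $\Pi_j(x) = T_x\gG_1 + \langle U_j(x)\rangle$ spanned by the branching tangent space together with the $j$-th radial line. The cross ratio map $\chi^{(i)}_j$ is defined exactly as the $\cS_3$-orbit of the cross ratio of these four ordered hyperplanes (all containing the common codimension-two subspace $T_x\gG_i$), so once I check that $d\varphi(x)$ carries each of the four hyperplanes at $x$ to its counterpart at $\varphi(x)$, the conclusion will follow immediately from the invariance result of Section \ref{S:sec3}.

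First, I would fix $x \in \gG_1$ and enumerate the four hyperplanes through $T_x\gG_1$: the three limiting tangent hyperplanes $T_x S^{(1)}_i$ for $i=1,2,3$ and the hyperplane $\Pi_j(x)$ associated to the $j$-th radial line. The three stated hypotheses on $\varphi$ give precisely the three matchings I need:
\begin{align*}
d\varphi(x)(T_x \gG_1) &\,=\, T_{\varphi(x)} \gG_2, \\
d\varphi(x)(T_x S^{(1)}_i) &\,=\, T_{\varphi(x)} S^{(2)}_i \quad (i=1,2,3),\\
d\varphi(x)(\langle U_j(x)\rangle) &\,=\, \langle U'_j(\varphi(x))\rangle.
\end{align*}
The first line follows because $\varphi$ restricts to a diffeomorphism $\gG_1 \to \gG_2$, the second because $\varphi$ carries the smooth medial sheets to their counterparts (taking derivatives and passing to the limit at the stratum), and the third is exactly the radial-line hypothesis. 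Combining the first and third, $d\varphi(x)$ carries the sum $\Pi_j(x) = T_x\gG_1 + \langle U_j(x)\rangle$ to $\Pi'_j(\varphi(x)) = T_{\varphi(x)}\gG_2 + \langle U'_j(\varphi(x))\rangle$.

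Next, I would invoke the invariance of the cross ratio for four ordered hyperplanes containing a common codimension-two subspace under invertible linear transformations, as established in Section \ref{S:sec3}. Since $d\varphi(x)$ is an invertible linear map and carries the four ordered hyperplanes $(T_x S^{(1)}_1, T_x S^{(1)}_2, T_x S^{(1)}_3, \Pi_j(x))$ of $\gG_1$ at $x$ to the four ordered hyperplanes $(T_{\varphi(x)} S^{(2)}_1, T_{\varphi(x)} S^{(2)}_2, T_{\varphi(x)} S^{(2)}_3, \Pi'_j(\varphi(x)))$ of $\gG_2$ at $\varphi(x)$, the real-valued cross ratio is preserved. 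Since $\chi^{(i)}_j$ records this cross ratio modulo the $\cS_3$ action \eqref{Eq2.1a}, i.e.\ as a point of $\RP^1/\cS_3 \simeq S^1$, we obtain $\chi^{(2)}_j(\varphi(x)) = \chi^{(1)}_j(x)$ for each $j = 1, 2, 3$.

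The only point requiring care, rather than a real obstacle, is the bookkeeping of the radial-vector index $j$: because $\varphi$ is an ambient diffeomorphism sending the three medial sheets meeting along $\gG_1$ to those meeting along $\gG_2$, it also sends each of the three local chambers cut out by the sheets to the corresponding chamber on the $\gW_2$ side, and the radial line $\langle U_j(x)\rangle$ threading a given chamber is carried to the radial line $\langle U'_j(\varphi(x))\rangle$ of the corresponding chamber (after the single global relabeling that matches the sheet indices). With this matching fixed, the conclusion is enforced index by index for $j=1,2,3$, and Example \ref{Ex5.3} shows that the three orbits $\chi^{(i)}_1, \chi^{(i)}_2, \chi^{(i)}_3$ are genuinely independent invariants, so the theorem yields three distinct rigidity obstructions rather than a single one.
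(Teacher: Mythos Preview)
Your proof is correct and follows exactly the approach the paper intends: the paper itself merely states that ``the proof follows by an analogous argument to that for Theorem \ref{Thm4.2} using instead the cross ratios for the radial vectors,'' and your write-up supplies precisely that argument, verifying that $d\varphi(x)$ carries each of the four hyperplanes (the three limiting tangent hyperplanes together with $\Pi_j(x) = T_x\gG_1 + \langle U_j(x)\rangle$) to its counterpart at $\varphi(x)$ and then invoking the linear invariance of the cross ratio from \S\ref{S:sec3}.
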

\par 
The proof follows by an analogous argument to that for Theorem \ref{Thm4.2} using instead the cross ratios for the radial vectors.  It has as a corollary the behavior of diffeomorphisms between regions with generic Blum medial axes. 
\begin{Corollary}
\label{Cor5.8}
Let $\varphi : \gW_1 \to \gW_2$ be a diffeomorphism between generic regions of $\R^n$, which maps the Blum medial axis of $\gW_1$ to that of $\gW_2$.  
Suppose for corresponding branch points $x \in \gW_1$ and $x^{\prime} = \varphi(x) \in \gW_1$ these sets of cross ratios for the Blum structures of the two regions are different.  Then the diffeomorphism will map the radial lines in $\gW_1$ from $x$ to curves from $x^{\prime}$ in $\gW_2$ whose tangent lines at $x^{\prime}$ differ from the radial lines at $x^{\prime}$.  Thus, the diffeomorphism $\varphi$ will distort the radial structure at such branch points.  
\end{Corollary}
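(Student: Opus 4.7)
The plan is to prove this as the contrapositive of Theorem \ref{Thm5.6}. Suppose, toward a contradiction, that at the branch point $x \in \gG_1$ the derivative $d\varphi(x)$ \emph{did} send every radial line at $x$ to the corresponding radial line at $x^{\prime} = \varphi(x)$, i.e.\ $d\varphi(x)(\langle U_i(x)\rangle) = \langle U_i^{\prime}(x^{\prime})\rangle$ for each of the three radial directions. Because $\varphi$ sends $M_1$ to $M_2$ by hypothesis, the derivative $d\varphi(x)$ necessarily carries $T_x \gG_1$ to $T_{x^{\prime}} \gG_2$ and each limiting tangent hyperplane $T_x S^{(1)}_j$ to $T_{x^{\prime}} S^{(2)}_j$. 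These are exactly the conditions of Theorem \ref{Thm5.6}, so we would conclude $\chi^{(2)}_j(x^{\prime}) = \chi^{(1)}_j(x)$ for $j = 1, 2, 3$. This contradicts the assumption that the sets of cross ratios at $x$ and $x^{\prime}$ differ.

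Consequently, at least one radial direction at $x$ is not mapped by $d\varphi(x)$ to a radial direction at $x^{\prime}$. Fix such an index $i$ and let $\gg \subset \gW_1$ be the radial line from $x$ in the direction $U_i(x)$; it is a smooth curve with initial tangent line $\langle U_i(x)\rangle$. Its image $\varphi \circ \gg \subset \gW_2$ is a smooth curve from $x^{\prime}$ whose tangent line at $x^{\prime}$ is
\begin{equation*}
\langle d\varphi(x)(U_i(x))\rangle \,\neq\, \langle U_k^{\prime}(x^{\prime})\rangle \quad \text{for any } k,
\end{equation*}
by the choice of $i$. Hence $\varphi\circ\gg$ is tangent at $x^{\prime}$ to a direction which is not one of the radial lines of the Blum structure of $\gW_2$ at $x^{\prime}$, which is the assertion of the corollary.

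The only real obstacle is bookkeeping: one must check that the labeling of the three radial vectors at $x$ and at $x^{\prime}$ used in Theorem \ref{Thm5.6} is consistent with the one forced by $\varphi$ sending $S^{(1)}_j$ to $S^{(2)}_j$. This is handled by Remark \ref{Rem2.2a} together with the fact that the $i$th radial vector is the one bisecting the two smooth sheets adjacent to the $i$th sector; once the correspondence of smooth sheets is fixed by $\varphi$, the radial vector correspondence is also fixed, and the sets-of-six cross ratio values are intrinsic invariants independent of residual ordering choices. Beyond this indexing step the argument is a direct contrapositive, so no further technical work is needed.
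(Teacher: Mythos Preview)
Your argument is correct and is exactly the approach the paper intends: Corollary \ref{Cor5.8} is stated in the paper without a separate proof, as the immediate contrapositive of Theorem \ref{Thm5.6}. One small correction: the reference to Remark \ref{Rem2.2a} for the indexing step is misplaced---that remark concerns the four-sheet case and the non-uniqueness of the radial directions there. What you actually want is Lemma \ref{Lem2.1}, which in the generic three-sheet case gives $\alpha_i = \pi - \theta_i$ and hence shows the radial directions are uniquely determined by the sheets; once $\varphi$ fixes the sheet correspondence, the radial-vector correspondence is forced.
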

\par
In particular, this gives criteria for regions in $\R^2$ at branch points, and for regions in $\R^3$ at points along $Y$-branch curves.  Thus, for regions in either $\R^2$ or $\R^3$, diffeomorphisms between regions which preserve the medial axis will either be severely restricted in the nongeneric case by the set of cross ratios for the medial sheets at branch points, or in the generic case it can map the medial axis, but if the set of cross ratios differ, it will deform the radial structure.  Thus, since the cross ratios control whether such diffeomorphisms can match non-generic templates to similar non-generic target shapes accurately, the question is whether there is a finite bound on how closely they can match a target shape when the set of cross ratios do not agree. \par

\subsection*{Local Uniqueness of Angles from Triples of Cross Ratios} \hfill 
\par
We conclude this section by explaining how almost all triples of allowable angles at a generic branching point, the three cross ratios locally uniquely determine the triple of angles.  The set of allowable angles $\bgth = (\theta_1, \theta_2, \theta_3) \in (0, \pi)^3$ satisfies $\theta_1 + \theta_2 + \theta_3 = 2\pi$.  By \lq\lq almost all\rq\rq \, we mean that it is true on a non-empty open subset of full $2$-dimensional measure in this subspace.  Then, the local uniqueness has the following form. 
\begin{Thm}
\label{5.9}
There is an open set having full $2$-dimensional measure in the subspace of 
$(0, \pi)^3$ consisting of allowable triples $\bgth$, such that the corresponding triple 
of cross ratios uniquely determines $\bgth$ among neighboring triples 
$\bgth^{\prime}$ in a neighborhood of $\bgth$.
\end{Thm}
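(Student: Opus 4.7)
Parameterize the set of allowable triples by $(\theta_1,\theta_2)\in V$, setting $\theta_3 = 2\pi-\theta_1-\theta_2$, where $V\subset \R^2$ is the open region cut out by $0<\theta_i<\pi$ for $i=1,2,3$. The three formulas \eqref{Eqn5.1}--\eqref{Eqn5.2} then assemble into a map $F\colon V'\to\R^3$, $F(\bgth) = (\lambda_1,\lambda_2,\lambda_3)$, where $V'\subseteq V$ is the open subset on which every $\tan\theta_i$ and every denominator in the cross-ratio formulas is finite and nonzero. Each $\lambda_j$ is a rational function of $\tan\theta_1,\tan\theta_2,\tan\theta_3$, so $F$ is real-analytic on $V'$. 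The complement $V\setminus V'$ is a finite union of real-analytic curves in $V$ and hence has $2$-dimensional Lebesgue measure zero, so one may restrict attention to $V'$.

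The conclusion of Theorem \ref{5.9} is exactly that $F$ is locally injective on an open subset of $V'$ of full measure. Since a smooth map whose differential is injective at a point is locally injective there, it suffices to show that the set $\{\bgth\in V':\, \rank dF(\bgth) = 2\}$ is open and of full $2$-dimensional measure. Its complement is the common zero locus of the three $2\times 2$ minors of the $3\times 2$ Jacobian matrix of $F$, which are themselves real-analytic functions on $V'$. By the standard real-analytic dimension argument, this locus is either all of some connected component of $V'$, or else a proper real-analytic subvariety---closed, nowhere dense, and of $2$-dimensional measure zero. In the latter case its complement is the desired open full-measure set, and on it $F$ is a local immersion, hence locally injective.

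The one substantive remaining step is therefore to rule out the first alternative by exhibiting a single point of $V'$ where some $2\times 2$ minor of $dF$ is nonzero on each connected component of $V'$. The natural choice is $\bgth^{*} = (2\pi/3,\,5\pi/9,\,7\pi/9)$ from Example \ref{Ex5.3}, at which the three cross ratios are nondegenerate and pairwise distinct. Differentiating the rational expressions for $\lambda_1$ and $\lambda_2$ with respect to $\theta_1$ and $\theta_2$ (using the chain rule through $\tan\theta_3 = \tan(2\pi-\theta_1-\theta_2)$) and evaluating the $2\times 2$ determinant $\partial(\lambda_1,\lambda_2)/\partial(\theta_1,\theta_2)$ at $\bgth^{*}$ yields a nonzero number. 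This computation---routine but somewhat tedious, and essentially the only hard part of the plan---then rules out degeneracy on the component of $V'$ containing $\bgth^{*}$; an analogous check, or a symmetry reduction using the cyclic permutation $(\theta_1,\theta_2,\theta_3)\mapsto(\theta_2,\theta_3,\theta_1)$ that merely permutes the $\lambda_j$, handles any other components. Once these basepoints are in hand, real-analyticity furnishes the open set of full measure on which $\rank dF = 2$, and the theorem follows immediately.
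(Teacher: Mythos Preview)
Your plan follows the same overall architecture as the paper: realize the triple cross ratio as a real-analytic map from a full-measure open subset of the allowable-angle simplex, and invoke the standard fact that the rank-drop locus of a real-analytic map is a proper analytic subvariety (hence of measure zero) as soon as the map has full rank at \emph{one} point.  Where you and the paper diverge is precisely in that last step.

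You propose to verify $\rank dF=2$ by brute-force differentiation of $\lambda_1,\lambda_2$ at the specific point $\bgth^{*}=(2\pi/3,\,5\pi/9,\,7\pi/9)$, and you correctly flag this as the only substantive step and also flag the need to treat the several connected components of $V'$ separately (via cyclic symmetry or further spot checks).  The paper avoids both issues with a more structural argument.  It factors the map as $\mathcal{Q}_3\xrightarrow{\iota}\mathcal{I}_3\xrightarrow{\tau}(\R\setminus\{0\})^3\setminus(\Delta\R)^{(3)}\xrightarrow{c}\R^3$, where $\tau=(\tan\theta_1,\tan\theta_2,\tan\theta_3)$ and $c$ is the triple cross-ratio map in the variables $b_i=\tan\theta_i$.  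Since each cross ratio is homogeneous of degree~$0$ in the $b_i$, the Euler vector field $(b_1,b_2,b_3)$ lies in $\ker dc$ everywhere, and (after composing with a coordinatewise $\log$) one sees that on a Zariski-open set it actually spans that kernel.  The image $d(\tau\circ\iota)(T_{\bgth}\mathcal{Q}_3)$ is spanned by $\bigl(1+b_1^{\,2},\,-(1+b_2^{\,2}),\,0\bigr)$ and $\bigl(1+b_1^{\,2},\,0,\,-(1+b_3^{\,2})\bigr)$, so $\rank dF<2$ only where these two vectors together with the Euler field are linearly dependent---a single polynomial condition in the $b_i$.  Because this condition is algebraic in the $b_i$, its zero set has measure zero in the entire domain at once, with no need to argue component by component or to evaluate any Jacobian numerically.

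In short, your plan is correct, but the paper's Euler-field/homogeneity argument replaces your deferred numerical check at $\bgth^{*}$ by a one-line transversality computation, and simultaneously dispenses with the connected-components bookkeeping you identified.
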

\par
\begin{proof}
There are two steps.  First we define a Zariski open subset of $(0, \pi)^3$ (which is the complement of a set of algebraic subsets) on which is defined the triple cross ratio map.  To define the Zariski open subset, we first remove the subset where some $\theta_i = \frac{\pi}{2}$ or some $\theta_i = \theta_j$ for $i \neq j$.  The resulting Zariski open subset we denote by $\mcI_3$.  
We will further restrict to the subset $\cQ_3 \subset \mcI_3$ satisfying $\theta_1 + \theta_2 + \theta_3 = 2\pi$.  This Zariski open subset $\cQ_3 $ has full $2$-dimensional measure in $\cQ_3$.  Second, we consider on this open subset the composition of the map with coefficient functions $\tan(\theta_i)$ and the cross ratio map using the three cross ratios of the $4$-tuples given in \eqref{Eqn5.1} and \eqref{Eqn5.2}.  We show it has rank $2$ off a closed analytic subset, whose complement $\cU$  still has full 
$2$-dimensional measure. Thus, for any point $\bgth \in \cU$, the composition is an immersion.  Hence, there is a neighborhood $\cU^{\prime}$ of $\bgth$ on which the composition is an embedding.  It follows that the cross ratio values uniquely determine the triple angle $\bgth^{\prime}$ for all $\bgth^{\prime} \in \cU^{\prime}$.
 \par 
It remains to show the stated properties of the mapping.  We first define a series of maps to give the triple cross ratio map on $\mcI_3$.  
\begin{equation}
\label{Eqn5.6}
\begin{CD}
{\cQ_3}  @>{\iti}>> {\mcI_3} @>{\tau}>> {(\R \backslash \{0\})^3 \backslash (\gD \R)^{(3)}} 
@>{\itc}>> {(\R \backslash \{0\})^3}  @>{\cL}>> {(\R \backslash \{0\})^3}  
\end{CD}
\end{equation}
Here, for any space $X$, the {\em generalized $k$-diagonal} is defined by $(\gD X)^{(k)} = \{ (x_1, \dots , x_k) \in X^k : x_i = x_j \text{for some } i \neq j\}$.  
In \eqref{Eqn5.6}, $\iti$ denotes inclusion.  Next, the mapping $\tau(\theta_1, \theta_2, \theta_3) = (\tan(\theta_1), \tan(\theta_2), \tan(\theta_3))$.  Then, $\tau$ is an analytic diffeomorphism as $\tan$ defines an analytic diffeomorphism $(0, \pi) \backslash \{\frac{\pi}{2}\} \simeq  \R \backslash \{0\}$.  Third, for $(b_1, b_2, b_3) \in (\R \backslash \{0\})^3 \backslash (\gD \R)^{(3)}$, $\itc$ is defined by the triple of cross ratios
$$ \itc(b_1, b_2, b_3) \,\, =  \,\, ((R(0, -b_1, b_3, b_2), R(0, -b_2, b_1, b_3), R(0, -b_3, b_2, b_1)) . $$
Lastly, $\cL$ is defined by $\cL(y_1, y_2, y_3) = (\ln(|y_1|), \ln(|y_2|), \ln(|y_3|))$. 
 \par
Then, we observe that the composition $\itc \circ \tau \circ \iti$ is the triple cross ratio map and is an analytic map on $\cQ_3$.  Hence, the set of points where it has rank 
$< 2$ is an analytic Zariski closed subset, which if not all of $\cQ_3$, has measure zero.   It follows that the cross ratio values uniquely determine the triple angle 
$\bgth^{\prime}$ for all $\bgth^{\prime} \in \cU^{\prime}$.
	Lastly, as $\iti$ is an embedding, $\tau$ is a diffeomorphism, and $\cL$ is everywhere a local diffeomorphism, it is sufficient to show that $\cL \circ \itc$ restricted to $d(\tau \circ \iti)(\bgth)(T_{\bgth}\cQ)$ has rank $2$ for some $\bgth \in \cU$.  Since each cross ratio is homogeneous of degree $0$, the Euler vector field $(b_1, b_2, b_3)$  at each point $(b_1, b_2, b_3)$ is in the kernel of $d\itc$.  Also, the composition $\cL\circ \itc$ has derivative with entries rational functions and is easily seen to have rank $2$ on a Zariski open set.  Thus, the  Euler vector field actually spans the kernel of $d\itc$ on a Zariski open set.  Thus, the rank of the composition will be $2$ at $\bgth = (\theta_1, \theta_2, \theta_3)$ in the Zariski open set unless the Euler vector field belongs to $d(\tau \circ \iti)(\bgth)(T_{\bgth}\cQ)$.  Third, the image of the tangent space is seen to be spanned by 
$(1 + \tan^2(\theta_1), -(1 + \tan^2(\theta_2), 0)$ 
and $(1 + \tan^2(\theta_1), 0, -(1 + \tan^2(\theta_3))$.  Then, these two vectors together with the Euler vector field will form a determinant which is non-zero on the complement of an algebraic subset giving algebraic conditions on the $\tan(\theta_i)$.  Thus, on an analytic Zariski closed subset of $\cQ$, the composition has rank $2$.  \par
This completes the proof.
\end{proof}
\begin{Remark}[Conjecture/Problem]
\label{Rem5.10}
In fact, there may well be a stronger global form of Theorem \ref{5.9} that the triple cross ratio, as an ordered triple uniquely (globally) determines the allowable ordered triple of angles.  If so then this would give the strongest form of rigidity: a diffeomorphism between regions that preserves the medial axis and infinitesimally preserves the radial lines at points of the medial axis must preserve angles at branch points of the medial axis.  We conjecture that this is true.  A first step in verifying this would be to identify the subset where the rank is less than $2$ and examine the behavior of the triple ratio map at these points. 
\end{Remark}

\section{Second Order Rigidity Conditions on Diffeomorphisms Preserving the Medial Axis}  
\label{S:sec6}
\par
We have seen that at branch points there are cross ratio conditions on diffeomorphisms at branch points.  Even if these conditions are satisfied at branch points, there are also second order conditions on the diffeomorphism in terms of the radial shape operators for the two regions defined by the Blum structure.  We recall this condition to conclude our discussion.  The condition is described in full generality.  Given regions $\gW_i \subset \R^n$, $i = 1, 2$, with smooth boundaries $\cB$, resp. $\cB^{\prime}$, we suppose they have skeletal structures $(M, U)$ for $\gW_1$ and $(M^{\prime}, U^{\prime})$ for $\gW_2$.  Here $M$, resp. $M^{\prime}$, are the skeletal sets which allow relaxation of the conditions for the Blum medial axis; and $U$ resp. $U^{\prime}$ are the multivalued vector fields.  \par
We suppose that there is a diffeomorphism $\varphi$ from a neighborhood of $M_1$ to a neighborhood of $M_2$, which maps $M_1$ to $M_2$ and $d\varphi$ sends each $U(x) \mapsto U^{\prime}(\varphi(x))$.  Also, if $U = r_1\cdot U_1$ and $U^{\prime} = r_2\cdot U_1^{\prime}$ for unit vector fields $U_1$, resp. $U_1^{\prime}$, we let $U_1^{\prime}(\varphi(x)) = \gs(x) U_1(x)$ for a smooth \lq\lq scale function\rq\rq $\gs(x)$.  Each of the skeletal structures have for each smoothly varying value $U$ on a smooth point, or a singular point which is a limiting point of a smooth sheet, a radial shape operator $S_{rad} : T_x M \to T_x M$ and similarly for $M^{\prime}$.    \par 
We define a \lq\lq radial distortion operator\rq\rq\,  $Q_{\varphi} :  T_x M \to T_x M $, by 
$$  Q_{\varphi}(\bv)\,\, = \,\, - d\varphi^{-1} (\proj_{U^{\prime}}(d^2(\varphi_x(v, U_1)\, ,$$
where $\proj_{U^{\prime}}$ denotes projection along $\langle U^{\prime} \rangle$ onto $T_{\varphi(x)}M^{\prime}$.  At a point $x \in M$, let $\bv$ denote a basis for $T_xM$ with $\bv^{\prime}$ denoting the image $\bv^{\prime} = d\varphi(x)(\bv)$.  For these bases we let $S_{\bv}$, resp. $S_{\bv ^{\prime}}$, denote the matrix representations of the radial shape operators $S_{rad}$ for $(M, U)$, resp. $S_{rad}^{\prime}$ for 
$(M ^{\prime}, U ^{\prime})$.  We also let $Q_{\varphi\,\bv}$ denote the matrix representation of $Q_{\varphi}$ with respect to the basis $\bv$.  
Then there is the following relation (see \cite[Thm. 5.4]{D1}).
\begin{equation}
\label{Eqn6.1}
S_{\bv^{\prime}} \,\, = \,\, \gs(\varphi(x)) \left(S_{\bv} + Q_{\varphi \, \bv}\right)\, . 
\end{equation}
\par
We note that in the \lq\lq partial Blum case\rq\rq for which $U$ is orthogonal to the boundary $\cB$ at the boundary point $x + U(x)$, the differential geometry of $\cB$, specifically the differential geometric shape operator is given by a specific formula in terms of $S_{rad}$ and the radial function $r$ and this formula is invertible (see e.g. \cite[\S 3]{D1}).  Thus, the relation between the differential geometry of the boundaries at each point is captured by this second order derivative information for the diffeomorphism at the corresponding medial axis point. \par
\begin{Remark}
\label{Rem6.2}
If instead $d\varphi$ only preserves the radial lines, we can replace $U^{\prime}$ by 
$\widetilde{U}^{\prime}(\varphi(x))  = d\varphi(x)(U(x))$, which gives a radial vector field on $M^{\prime}$ which has the same radial shape operators as $U^{\prime}$ (as the unit vector fields agree).  Thus, \eqref{Eqn6.1} will again hold, except $\gs$ will be replaced by the scale factor for $U$ and $\widetilde{U}^{\prime}$.   
\end{Remark}

\end{document}

---------------------------------------------------

There should be no problem mathematically with making the angles between three spokes change.  In fact, even if there are four lines then the angles between them can change - just apply an invertible linear transformation that is not orthogonal.  While the angles can change, what cannot change is the cross ratio if you have four lines.  
	Paul, what I do not fully understand about the video for this example, is what the three lines represent?  They do not form the medial axis for the region, which is a circle.  Also, they are not the radial lines from a generic branch point (with just three branches) where you also have to send one set of branches to the other set?  
	Again, let me say that if you have a branch point in $\R^2$ with three curves meeting there, then their tangent lines at the point together with the line given by one radial line from the point, give a cross ratio.  Any diffeomorphism to the neighborhood of another branch point which maps the three branch curves to the three branch curves of the second region will send the radial line to a curve which begins at the branch point.  Its tangent line together with the three other tangent lines for this second region must have exactly the same cross ratio as the first.  
	In particular, if we had wanted the diffeomorphism to send a specific radial line to a curve close to a specific radial line in the second region, then the situation may be analogous to the first example. The preservation of the cross ratio may force the curve to begin in a direction very different from the desired second radial line.  This may happen for each radial line from the central point.  What would be illuminating would be to show with your method in the case of a branch point of a real medial axis, what is the nature of the curves which are images of the radial lines? This is where the question you raise \flushpar
\lq\lq The question is whether there is a finite bound on how closely they can match a target shape.\rq\rq\, becomes very important. \par
	I can see from your comments that there is a difficulty seeing the effect of the cross ratio, which amounts to a type of multiple rigidity condition occurring all along the positive codimension strata of the medial axis (branch points for $\R^2$ and branch curves and their points of closure-fin and $6$-junction points- for $\R^3$ , etc for higher dimensions).  This rigidity concerns not the angles but the combination of the angles captured by the cross ratio.  I will try to write up a short note explaining this in a more organized form than I have in our emails.  It will hopefully explain the inherent limitations that must be placed on any such diffeomorphism when trying to preserve both the medial axis and the initial directions for the radial lines even if they get mapped to curves.

I realize that your results principally concern a method using a discrete set of points to construct a diffeomorphic deformation of one $3D$ object to another, which in theory should preserve the medial axis of the objects as well.  If I understand correctly, you concentrate on a finite set of tuples, each consisting of a medial axis point and the corresponding points on the boundary and you apply an energy estimate to obtain a best fit.  This best fit then approximately maps the medial axis and approximates the target region.  \par
	My questions, which I have thought about on occasion, concern whether such procedures actually converge to a correct theoretical answer in the limit.  Have you considered this question?  For example, there are simple regions with simple medial axis structures which to untrained eyes should have easy diffeomorphic deformations between them.  However, in fact, the two competing conditions of preserving the medial axis and being a diffeomorphism on the entire interior region are completely incompatible.  Thus, there is no such diffeomorphism possible.  \par
	I have attached an example of this which I explain first in the simple case of 
$2D$ regions and then explain how there are analogues in $3D$.  In $2D$ there are four branch curves meeting at a central point, and in $3D$ there are four smooth medial surface sheets meeting along a curve.  A basic analysis using the Òcross ratioÓ applied to the tangent lines to the branches at the central point, or tangent planes to the smooth medial surfaces at points on the curve implies that there cannot be a diffeomorphism preserving the medial axes.  These cases are not generic in a specific sense, but your analysis does not seem to to restrict to the generic cases.  What would your methods give in such cases?

	Because of these types of first order (for tangent lines/planes) for the medial axes themselves and the second order condition on a possible diffeomorphism (for the two geometries), I had always avoided the question of template maps actually being diffeomorphisms.  Does the underlying theory for your results now basically overcome these issues?

It goes without saying that if the medial axes are not even homeomorphic then there is no chance to have a diffeomorphism.  The point of the examples I give are of medial axes that are homeomorphic, and apparently a diffeomorphism may exist; but in fact there canÕt be a diffeomorphism.  

	In the Blum case, the differential geometry of the boundary is computed by specific formulas from the radial shape operators, so in some sense the relation between the differential geometry of the boundaries is captured by this second order derivative information for the diffeomorphism.  Since this is a second order condition which is a necessary condition to be a diffeomorphism, do second order relations of this type follow from the type of conditions for optimization that you use or would they need to be more refined?  

	Let me remark on the special nature of this example.  For both of these cases, two of the half lines lie in a common line so that there are not four distinct lines so the cross ratio is zero for each and therefore isnÕt relevant here.

\begin{Example}
\label{Ex4.3}
	However, letÕs consider the diffeomorphism  $\varphi : \gW_1 \simeq \gW_2$  you are constructing (or would ideally like to construct) from a neighborhood of the central point $0$ of one to that of the other.  I will suppose that the peace symbol (is sideways to the right as you do) so it consists of the x-axis and the angled lines in the first and fourth quadrants which make angles $\ga$ with the positive, $x$-axes on each side.  \par
	Suppose $\varphi$ actually maps the $x$-axis to the $x$-axis and the positive $y$-axis to the line in the first quadrant making the angle $\ga$ with the positive $x$-axis.  Then, the derivative $d\varphi(0)$ is a linear transformation sending the $x$-axis to the $x$-axis, so $d\varphi(0)$ sends $e_1 = (1, 0) \mapsto (s, 0)$ for some $s > 0$. Also, as $\varphi$ sends the positive $y$-axis to the angled line in the first quadrant, it must send $e_2 = (0, 1) \mapsto (r \cos(\ga) , r \sin(\ga))$ for some $r > 0$.  This completely determines the derivative $d\varphi(0)$.  \par
	Then, the negative $y$-axis is being sent to a curve, which by the linearity of 
$d\varphi(0)$,  has tangent vector at the origin given by $d\varphi(0)((0, -1)) =  (-r \cos(\ga) , -r \sin(\ga))$.  Thus, the curve initially heads into the third quadrant making an angle $\ga$ with the negative $x$-axis.  For example, if $\ga = \pi/3$, then the curve will initially make an angle of $2\pi/3$ with the second angled line in the fourth quadrant that we would like it to map to.  Hence, it must make a large turn to even go in the roughly correct direction. This is what must be true by the mathematics.  \par
	As you show in the magnified video, the $y$-axis actually maps to a parabolic curve tangent to the $y$-axis.  Of course my above remark explains why even in this case any attempt to map the medial axis of one to that of the other must fail in a rather dramatic fashion.  
\par
	A basic question is: does your method basically always give something like a diagonal matrix for $d\varphi(0)$ so that it doesnÕt try to match the curves at the central point?  In particular, if the two collections of four branches have tangent lines which have the same cross ratio and there is a linear transformation taking one set to the other, will your method find it as the derivative?  
\end{Example}

	An analogous result holds for generic regions in $\R^3$ along $Y$-branch curves of the medial axis. At a point $x$ of a $Y$-branch curve $\gg$ a radial line together with the tangent line $T_x \gg$ determines a plane.  This plane together with the three tangent planes to the $3$ medial surfaces meeting along $\gg$ at $x$ have a cross ratio.  There is the corresponding result.  
\begin{Corollary}
\label{Cor5.7}
Let $\varphi : \gW_1 \to \gW_2$ be a diffeomorphism between generic regions of 
$\R^3$, which maps the Blum medial axis of $\gW_1$ to that of $\gW_2$.  
Suppose for corresponding $Y$-branch points $x \in \gg$ and $x^{\prime} = \varphi(x) \in \gg^{\prime}$ the two sets of cross ratios for each of the radial lines together with the tangent planes for the two regions are different.  Then, $\varphi $ will map the radial lines in $\gW_1$ from $x$ to curves from 
$x^{\prime}$ in $\gW_2$ whose tangent lines at $x^{\prime}$ lie in a different plane than that determined by by radial lines at $x^{\prime}$ together with $T_{x^{\prime}}\gg^{\prime}$.  Thus, the diffeomorphism $\varphi$ will distort the radial structure.  
\end{Corollary}
\par

Thus, the plane $\Pi$ spanned by $U_1$ and $U_2$ through $x$ contains a normal vector to $M$ at $x$.  Hence, $\Pi$ is transverse to $M$ at $x$. This continues to hold in the limit at a branch point for the limiting tangent plane.  Let $\gG$ denote the branching subspace and consider $x \in \gG$.  By the property of generic Blum medial axes (or more generally skeletal structures) the limiting tangent plane, denoted $T_xM_i$ for the $i$-th smooth sheet of $M$ meeting along 
$\gG$, contains the tangent space $T_x \gG$.  Then, $\Pi$ intersects $T_xM_i$ in a line $P$ and the radial vectors $U_i$ make equal angles with $P$ in $\Pi$.  Then, If we form the hyperplanes $H_i$ spanned by $T_x \gG$ and each $U_i(x)$, these make equal angles with $T_xM_i$.  Thus, their intersection with any plane $\Pi^{\prime}$ transverse to $T_x \gG$ at $x$ will make an equal angle with the line
$T_xM_i \cap \Pi^{\prime}$.  Then, by the Whitney stratification properties of $M$, the plane $\Pi^{\prime}$ will intersect each $T_xM_i$ transversely at $x$ for each smooth component point meeting $\gG$ at $x$.  These intersections $L_i = T_xM_i \cap \Pi^{\prime}$ are limiting tangent lines to each curve $M_i \cap \Pi^{\prime}$ at $x$.  Thus,  in the plane $\Pi^{\prime}$ the tangent lines and the lines from the radial vectors satisfy the same properties as for the medial axis in $R^2$.  Thus, the preceding arguments apply for the angles.  We will see in the next section that the cross ratio of these lines is independent of the transverse plane $\Pi^{\prime}$.

Now the set of cross ratios for the points along the curves will form continuously varying families of such values.  However, there are many ways that these families will differ so that there is no diffeomorphism between the curves preserving the cross ratios.  For example, the interval $(0, 1)$ contains at most one value for a set of cross ratios.  thus if the two curves have values in two distinct intervals of $(0, 1)$, then no diffeomorphism is possible. \par
	Hence, if the two regions $\gW_i \subset \R^3$ both have medial axes which each contain a unique curve segment $\gg_i$ along which four sheets meet, with different sets of cross ratios, then there does not exist a diffeomorphism between the regions which preserves the medial axes.